\newtheoremstyle{definition}
{10pt}% measure of space to leave above the theorem. E.g.: 3pt
{10pt}% measure of space to leave below the theorem. E.g.: 3pt
{}% name of font to use in the body of the theorem
{}% measure of space to indent
{\bfseries}% name of head font
{}% punctuation between head and body
{.5em}% space after theorem head; " " = normal interword space
{}% Manually specify head
\newtheoremstyle{plain}
{10pt}% measure of space to leave above the theorem. E.g.: 3pt
{10pt}% measure of space to leave below the theorem. E.g.: 3pt
{\itshape}% name of font to use in the body of the theorem
{}% measure of space to indent
{\bfseries}% name of head font
{}% punctuation between head and body
{.5em}% space after theorem head; " " = normal interword space
{}% Manually specify head
\theoremstyle{plain}	
\newtheorem{thm}{Theorem}
\newtheorem{lem}[thm]{Lemma}
\newtheorem{prop}[thm]{Proposition}
\theoremstyle{definition}	
\newtheorem{remark}[thm]{Remark}
\DeclarePairedDelimiter\norm{\lVert}{\rVert}
\newcommand{\RR}{\mathbb{R}}      % for Real numbers
\newcommand{\conv}{\operatorname{conv}}
\newcommand{\spa}{\operatorname{span}}
\newcommand{\B}{\operatorname{B}}
\def\BB{\mathbb{B}}
\def\EE{\mathbb{E}}
\def\PP{\mathbb{P}}
\def\RR{\mathbb{R}}
\def\SS{\mathbb{S}}
\def\BBd{\mathbb{B}^d}
\def\SSd{\mathbb{S}^{d-1}}
\def\b{\beta}
\def\k{\kappa}
\def\o{\omega}
\def\G{\Gamma}
\def\bG{\mathbf{G}}
\def\bH{\mathbf{H}}
\def\bB{\mathbf{B}}
\def\bU{\mathbf{U}}
\def\bS{\mathbf{S}}
\def\cH{\mathcal{H}}
\def\F{\textup{F}}
\def\dint{\textup{d}}
\begin{document}

\title{\bfseries Monotonicity of facet numbers \\ of random convex hulls}

\author{Gilles Bonnet\footnotemark[1]\ , Julian Grote\footnotemark[2]\ , Daniel Temesvari\footnotemark[3]\ ,\\ Christoph Th\"ale\footnotemark[4]\ , Nicola Turchi\footnotemark[5]\ \ and Florian Wespi\footnotemark[6]}

\date{}
\renewcommand{\thefootnote}{\fnsymbol{footnote}}
\footnotetext[1]{Faculty of Mathematics, Ruhr University Bochum, Germany. E-mail: gilles.bonnet@rub.de}

\footnotetext[2]{Faculty of Mathematics, Ruhr University Bochum, Germany. E-mail: julian.grote@rub.de}

\footnotetext[3]{Faculty of Mathematics, Ruhr University Bochum, Germany. E-mail: daniel.temesvari@rub.de}

\footnotetext[4]{Faculty of Mathematics, Ruhr University Bochum, Germany. E-mail: christoph.thaele@rub.de}

\footnotetext[5]{Faculty of Mathematics, Ruhr University Bochum, Germany. E-mail: nicola.turchi@rub.de}

\footnotetext[6]{Institute of Mathematical Statistics and Actuarial Science, University of Bern, Switzerland. E-mail: florian.wespi@stat.unibe.ch}

%\footnotetext[7]{This work has been supported by the German Research Foundation DFG via GRK2131.}

\maketitle

\begin{abstract}
Let $X_1,\ldots,X_n$ be independent random points that are distributed according to a probability measure on $\mathbb{R}^d$ and let $P_n$ be the random convex hull generated by $X_1,\ldots,X_n$ ($n\geq d+1$). Natural classes of probability distributions are characterized for which, by means of  Blaschke-Petkantschin formulae from integral geometry, one can show that the mean facet number of $P_n$ is strictly monotonically increasing in $n$.
\bigskip
\\
{\bf Keywords}. {Blaschke-Petkantschin formula, mean facet number, random convex hull}\\
{\bf MSC}. {52A22, 52B05, 53C65, 60D05}
\end{abstract}

\section{Introduction and main result}

Fix a space dimension $d\geq2$. For an integer $n\geq d+1$, let $X_1,\dots,X_n$ be independent random points that are chosen according to an absolutely continuous probability distribution on $\RR^d$. By $P_{n-1}$ and $P_n$ we denote the random convex hulls generated by $X_1,\ldots,X_{n-1}$ and $X_1,\ldots,X_n$, respectively. In our present text we are interested in the mean number of facets $\EE f_{d-1}(P_{n-1})$ and $\EE f_{d-1}(P_n)$ of $P_{n-1}$ and $P_n$. More specifically, we ask the following monotonicity question:
\begin{center}
\textit{Is it true that $\EE f_{d-1}(P_{n-1})\leq\EE f_{d-1}(P_n)$?}
\end{center}
This question has been put forward and answered positively by Devillers, Glisse, Goa\-oc, Moroz and Reitzner \cite{DevillersEtAl} for random points that are uniformly distributed in a convex body $K\subset\RR^d$ if $d=2$ and, if $d\geq 3$, under the additional assumptions that the boundary of $K$ is twice differentiable with strictly positive Gaussian curvature and that $n$ is sufficiently large, that is, $n\geq n(K)$, where $n(K)$ is a constant depending on $K$. Moreover, an affirmative answer was obtained by Beermann \cite{Beer} if the random points are chosen with respect to the standard Gaussian distribution on $\RR^d$ or according to the uniform distribution in the $d$-dimensional unit ball for all $d\geq 2$. Beermann's proof essentially relies on a Blaschke-Petkantschin formula, a well known change-of-variables formula in integral geometry. Our aim in this text is to generalize her approach to other and more general probability distributions on $\RR^d$. In fact, we will be able to characterize all absolutely continuous rotationally symmetric distributions on $\RR^d$ whose densities satisfy a natural scaling property (see \eqref{eq:Scaling} below), to which the methodology based on the Blaschke-Petkantschin formula  can be applied and for which we can answer positively the monotonicity question posed above for any of these distributions. Moreover, we will apply our results to study similar monotonicity questions for a class of spherical convex hulls generated by random points on a half-sphere, which comprises as a special case the model recently studied by B\'ar\'any, Hug, Reitzner and Schneider \cite{BaranyHugReitznerSchneider}.

\medskip

To present our main result formally, we introduce four classes of probability measures:
\begin{itemize}
\item[-] $\bG$ is the class of centred Gaussian distributions on $\RR^d$ with density proportional to $$x\mapsto \exp\Big(-{\|x\|^2\over 2\sigma^2}\Big),\qquad \sigma>0,$$
\item[-] $\bH$ is the class of heavy-tailed distributions on $\RR^d$ with density proportional to $$x\mapsto \Big(1+{\|x\|^2\over\sigma^2}\Big)^{-\beta},\qquad \beta>d/2,\sigma>0,$$
\item[-] $\bB$ is the class of beta-type distributions on the $d$-dimensional centred ball $\BB^d_\sigma$ of radius $\sigma$ with density proportional to $$x\mapsto \Big(1-{\|x\|^2\over\sigma^2}\Big)^\beta,\qquad\beta>-1,\sigma>0,$$
\item[-] $\bU$ comprises the uniform distributions on the $(d-1)$-dimensional centred spheres $\SSd_\sigma$ with radius $\sigma>0$.
\end{itemize}
It will turn out that the classes $\bG$, $\bH$, $\bB$ and $\bU$ contain precisely the absolutely continuous rotationally symmetric probability distributions on $\RR^d$, whose densities satisfy the natural scaling property \eqref{eq:Scaling} below, for which monotonicity of the mean facet number of the associated random convex hulls can be shown by means of arguments based on a Blaschke-Petkantschin formula, see the discussion at the end of Section \ref{sec:Proofmonotonicity} for further details. In fact, our result shows that even the stronger strict monotonicity holds.

\begin{thm}\label{thm:monotonicity}
Let $X_1,\dots,X_n \in \RR^d$, $n\geq d+1$, be independent and identically distributed according to a probability measure belonging to one of the classes $\bG$, $\bH$, $\bB$ or $\bU$. Then,
\[ \EE f_{d-1} (P_n) > \EE f_{d-1} (P_{n-1}).\]
\end{thm}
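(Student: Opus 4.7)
Plan. I would follow the Blaschke--Petkantschin strategy Beermann~\cite{Beer} used for $\bG$ and for the uniform distribution in the ball, and extend it uniformly across all four scale-invariant classes via~\eqref{eq:Scaling}.

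\textbf{Steps 1--2: probabilistic facet count and BPF.} First express
\[
\EE f_{d-1}(P_n)=\binom{n}{d}\,\EE\bigl[\mu(H^+)^{n-d}+\mu(H^-)^{n-d}\bigr],
\]
where $H=H(X_1,\ldots,X_d)$ is the affine hyperplane through $X_1,\ldots,X_d$ and $H^\pm$ are the two open half-spaces it bounds: $\{X_1,\ldots,X_d\}$ supports a facet precisely when the remaining $n-d$ points lie in a common side of $H$. I would then apply the affine Blaschke--Petkantschin formula to rewrite the resulting expectation on $(\RR^d)^d$ as an iterated integral, first over $A(d,d-1)$ and then over $H^d$, with Jacobian $(d-1)!\vol_{d-1}(\conv(Y_1,\ldots,Y_d))$. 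By rotational symmetry of $\mu$ I parametrise $H=H_{u,t}$ with $(u,t)\in\SSd\times\RR$ and integrate out $u$, obtaining
\[
\EE f_{d-1}(P_n)=\kappa_d\binom{n}{d}\!\int_\RR[F(t)^{n-d}+F(-t)^{n-d}]\,\Psi(t)\,dt,
\]
where $F$ is the upper tail of the one-dimensional marginal $\tilde h$ of $\mu$ along any fixed direction and $\Psi(t)$ is the inner integral on $H_{u,t}^d$.

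\textbf{Step 3: scaling reduction.} The defining property~\eqref{eq:Scaling} of each class implies that the restriction of the radial density to $H_{u,t}$ is, up to normalisation, again of the same class on $\RR^{d-1}$ with a $t$-dependent scale $\sigma'(t)$, namely $\sigma'(t)^2=\sigma^2,\ \sigma^2+t^2$ or $\sigma^2-t^2$ for $\bG,\bH,\bB$ respectively; in the case $\bU$ the restriction is uniform on the equator $(d-2)$-sphere of radius $\sqrt{\sigma^2-t^2}$. This factorises $\Psi(t)$ as an explicit power of a scaling factor times a constant ``volume moment'' on $\RR^{d-1}$. Combining with $\tilde h$, substituting $s=F(t)$, and using $t\mapsto-t$ to merge the two tails, one arrives at the unified representation
\[
\EE f_{d-1}(P_n)=C_n\int_0^1 s^{n-d}\,\tilde w(s)\,ds,\qquad C_n=c_d\binom{n}{d},
\]
with an explicit non-negative weight $\tilde w$ independent of $n$, symmetric about $s=\tfrac12$ (from evenness of $\tilde h$) and vanishing at the endpoints of its support.

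\textbf{Step 4: strict monotonicity.} Taking the difference and applying the reflection $s\mapsto 1-s$ that preserves $\tilde w$ yields
\[
\EE f_{d-1}(P_n)-\EE f_{d-1}(P_{n-1})=\frac{C_{n-1}}{2(n-d)}\int_0^1 \tilde w(s)\,M_n(s)\,ds,
\]
where $M_n(s)=s^{n-1-d}(ns-(n-d))+(1-s)^{n-1-d}(d-ns)$ is also symmetric about $\tfrac12$. A direct check shows that $M_n$ has the explicit antiderivative
\[
\mathcal A(s)=\frac{n}{n-d+1}\bigl[s^{n-d+1}-(1-s)^{n-d+1}\bigr]-\bigl[s^{n-d}-(1-s)^{n-d}\bigr],
\]
which is antisymmetric, $\mathcal A(1-s)=-\mathcal A(s)$. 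Integration by parts, using the vanishing of $\tilde w$ at the endpoints and the joint antisymmetry of $\tilde w'$ and $\mathcal A$ about $\tfrac12$, reduces the claim to the positivity of
\(
-\int_{1/2}^{1}\tilde w'(s)\,\mathcal A(s)\,ds,
\)
which is then verified using the explicit form of $\tilde w$ in each of $\bG,\bH,\bB,\bU$.

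\textbf{Main obstacle.} The crux is Step~4. A naive pointwise bound fails because $\binom{n}{d}/\binom{n-1}{d}=n/(n-d)$ does not dominate the pointwise decrease $F(t)^{n-d}/F(t)^{n-1-d}=F(t)$; worse, $M_n(\tfrac12)=(1/2)^{n-1-d}(2d-n)<0$ as soon as $n>2d$, and $\mathcal A$ also changes sign on $(\tfrac12,1)$, so the integrand is not sign-definite. The positivity therefore hinges on the precise shape of $\tilde w$ produced by~\eqref{eq:Scaling}; in fact, if $\tilde w$ were allowed to concentrate too sharply at $s=\tfrac12$ the integral would become negative, which is why the four scaling classes are precisely those for which the Blaschke--Petkantschin route succeeds and monotonicity can be established.
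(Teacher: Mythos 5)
Your Steps 1--3 match the paper's route almost exactly: the identity $\EE f_{d-1}(P_n)=\binom{n}{d}\PP(\conv(X_1,\dots,X_d)\text{ is a facet})$, the affine (resp.\ spherical) Blaschke--Petkantschin formula, and the use of the scaling property \eqref{eq:Scaling} to split off a constant $(d-1)$-volume moment on $H$ and reduce everything to a one-dimensional integral in $s=F(h)$. The difference, and the problem, is Step 4, which is where the entire difficulty lives and which you do not actually carry out. Your integration-by-parts reduction to $-\int_{1/2}^{1}\tilde w'(s)\,\mathcal A(s)\,\dint s>0$ is a correct reformulation, but, as you yourself observe, $\mathcal A$ changes sign on $(\tfrac12,1)$ and the integrand is not sign-definite; saying the positivity ``is then verified using the explicit form of $\tilde w$'' in each class is precisely the assertion that needs a proof, and no mechanism is offered. (Also, one of your stated structural facts is false: for $\bU$ with $d=2$ the weight $L(s)^{d-1}$ is constant and does not vanish at the endpoints, so the boundary term in your integration by parts does not disappear.)

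The idea your proposal is missing is the one the paper isolates as Lemma \ref{l1}: the function $L(s)=f(F^{-1}(s))\,\psi(F^{-1}(s))$ is \emph{strictly concave} on $(0,1)$ --- this is checked by an explicit second-derivative computation, e.g.\ $L''(s)=-\tfrac{2}{c_{\bH,1,\beta}}(\beta-\tfrac d2)(1+(F^{-1}(s))^2)^{\beta-1-d/2}<0$ for the class $\bH$ --- and the linear factor $g(s)=\binom{n}{d}(1-s)-\binom{n-1}{d}$ has negative slope with root $s^\ast=d/n\in[0,1]$. Concavity lets one replace $L(s)^{d-1}$ by $\bigl(\tfrac{L(s^\ast)}{s^\ast}s\bigr)^{d-1}$, strictly increasing the integral because the replacement underestimates $L$ exactly where $g>0$ and overestimates it exactly where $g<0$; the resulting integral is then computed in closed form and equals zero via $\B(d,n-d+1)=\tfrac{n-d}{n}\B(d,n-d)$. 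Without this (or an equivalent) comparison argument, the positivity in your Step 4 remains an unproven claim, so the proposal as written has a genuine gap at its central step.
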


It should be emphasized that strict monotonicity of $n\mapsto f_{d-1}(P_n)$ cannot hold pathwise (except for the trivial case $n=d+1$), since the addition of a further random point can reduce the facet number arbitrarily as the additional point might `see' much more than $d$ vertices of the already constructed random convex hull. For this reason, the expectation in Theorem \ref{thm:monotonicity} is essential to deduce a non-trivial result.

We would also like to remark that monotonicity questions related to the volume of random convex hulls have recently attracted some interest in convex geometry because of their connection to the famous slicing problem. Namely, if $K$ and $L$ are two compact convex sets in $\RR^d$ with interior points, let $V_K$ and $V_L$ be the volume of the convex hull of $d+1$ independent random points uniformly distributed in $K$ and $L$, respectively. One is interested in the question whether the set inclusion $K\subseteq L$ implies the inequality $\EE V_K\leq \EE V_L$. In particular, the work of Rademacher \cite{Rademacher} shows that this is false in general whenever $d\geq 4$. Higher moments were treated by Reichenwallner and Reitzner \cite{ReichenwallnerReitzner}, and we refer to the discussion therein for further details and background material.

\medskip

Our text is structured as follows. In Section \ref{sec:Background} we recall the necessary background material from integral geometry and introduce some more notation. Moreover, in Section \ref{sec:aux-results} we develop several auxiliary results that prepare for the proof of Theorem \ref{thm:monotonicity}, which is presented in Section \ref{sec:Proofmonotonicity}. We also discuss in Section \ref{sec:Proofmonotonicity} the limitations and possible extensions of the method we use. The final Section \ref{sec:halfsphere} contains the application of Theorem \ref{thm:monotonicity} to random convex hulls on a half-sphere.

\section{Background material from integral geometry}\label{sec:Background}

\subsection{General notation}
We denote by $A(d,q)$ the Grassmannian of all $q$-dimensional affine subspaces of $\RR^d$, where $q\in\{0,1,\ldots,d\}$. It is a locally compact, homogeneous space with respect to the group of Euclidean motions in $\RR^d$. The corresponding locally finite, motion invariant measure is denoted by $\mu_q$, which is normalized in such a way that
\begin{displaymath}
  \mu_q(\{E\in A(d,q):E \cap \BBd \neq \emptyset\})=\kappa_{d-q},
\end{displaymath}
see \cite{SW}. Here, $\BBd$ is the centred $d$-dimensional unit ball, $\kappa_{d-q}={\pi^{(d-q)/2}\over\Gamma(1+{d-q\over 2})}$ is the $(d-q)$-dimensional volume of $\BB^{d-q}$ and $\Gamma(\,\cdot\,)$ indicates the Gamma function. Moreover, the Beta function is given by 
$$
B(a,b):=\int_0^1s^{a-1}(1-s)^{b-1}\,\dint s={\Gamma(a)\Gamma(b)\over\Gamma(a+b)},\qquad a,b>0.
$$
We shall denote by $\SSd$ the $(d-1)$-dimensional unit sphere and abbreviate by $\o_d=d\k_d$ its total spherical Lebesgue measure. For a subspace $E\in A(d,q)$, we let $\lambda_E$ be the Lebesgue measure on $E$.
%Note that each $E\in A(d,d-1)$ is an affine hyperplane in $\RR^d$.

For a set $K\subset\RR^d$, we shall write $\mathcal{H}_K^{q}$ for the $q$-dimensional Hausdorff measure on $K$. The operators $\conv(\,\cdot\,)$ and $\spa(\,\cdot\,)$ denote the convex and linear hull of the argument. We will use the notation $\Delta_{d-1}(x_1,\dots,x_d)$ to indicate the $(d-1)$-dimensional volume of the convex hull of $d$ points $x_1,\dots,x_d$.

\subsection{Blaschke-Petkantschin formulae} 
Our proof of Theorem \ref{thm:monotonicity} heavily relies on Blaschke-Petkant\-schin formulae from integral geometry. First, we rephrase a special case of the affine Blaschke-Petkantschin formula in $\RR^d$, which appears as Theorem 7.2.7 in \cite{SW}.

\begin{prop}\label{thm:affine-BP}\
	Let $f: (\RR^d)^d \rightarrow \RR$ be a non-negative measurable function. Then,
	\begin{align*}
		&\int \limits_{(\RR^d)^d} f(x_1,\dots,x_d) \,\dint(x_1,\dots,x_d)\\ 
		&= \frac{\omega_d}{2} (d-1)! \int \limits_{A(d,d-1)} \int \limits_{H^d} f(x_1,\dots,x_d) \Delta_{d-1}(x_1,\dots,x_d)\, \lambda_H^d(\dint(x_1,\dots,x_d)) \,\mu_{d-1}(\dint H).
	\end{align*}
\end{prop}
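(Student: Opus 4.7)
I would prove this by a direct change-of-variables argument, following the template behind all Blaschke-Petkantschin formulae.

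First, I would parametrize affine hyperplanes by $(u,t)\in\SSd\times\RR$ via $H(u,t)=\{x\in\RR^d:\langle x,u\rangle=t\}$. Since each hyperplane has two such representations (corresponding to $(u,t)$ and $(-u,-t)$), the motion-invariant measure $\mu_{d-1}$, under the normalization recalled in Section 2.1, pulls back to
\[
\int_{A(d,d-1)}g(H)\,\mu_{d-1}(\dint H)=\frac{1}{\omega_d}\int_{\SSd}\int_\RR g(H(u,t))\,\dint t\,\sigma(\dint u),
\]
which one checks by inserting $g=\mathbf{1}\{H\cap\BBd\neq\emptyset\}$ and recovering $\kappa_1=2$. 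Next, note that Lebesgue-a.e.\ tuple $(x_1,\dots,x_d)$ is affinely independent and spans a unique affine hyperplane. This yields a 2-to-1 map
\[
\Phi:(u,t,y_1,\dots,y_d)\longmapsto(y_1+tu,\dots,y_d+tu),\qquad y_i\in u^\perp,
\]
onto the generic locus, with involution $(u,t,y_i)\leftrightarrow(-u,-t,y_i)$. The formula to prove then amounts to showing that the Jacobian of $\Phi$ equals $(d-1)!\,\Delta_{d-1}(y_1,\dots,y_d)$; the factor $\tfrac12$ absorbs the 2-to-1 cover and the $\tfrac{1}{\omega_d}$ normalization of $\mu_{d-1}$ then combines with the spherical integration to give the prefactor $\tfrac{\omega_d}{2}(d-1)!$.

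The main obstacle is the Jacobian computation. By the rotational invariance of Lebesgue measure, spherical surface measure and the simplex volume $\Delta_{d-1}$, one may evaluate at $u=e_d$. I would parametrize a neighborhood of $e_d$ by $\eta\in\RR^{d-1}$ with $u(0)=e_d$ and $\partial_{\eta_k}u|_0=e_k$, and use a smooth orthonormal frame $e_1(u),\dots,e_{d-1}(u)$ of $u^\perp$ that is flat at $e_d$; differentiating $\langle e_j(u),u\rangle\equiv 0$ forces the $e_d$-component $\partial_{\eta_k}e_j|_0=-\delta_{jk}e_d$, and the remaining freedom can be used to kill the other components. With $y_i=\sum_j y_i^{(j)}e_j(u)$, the partial derivatives of $x_i=y_i+tu$ at $u=e_d$ are
\[
\partial_{\eta_k}x_i|_0=-y_i^{(k)}e_d+t\,e_k,\qquad \partial_tx_i=e_d,\qquad \partial_{y_{i'}^{(j)}}x_i=\delta_{ii'}e_j.
\]

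Assembling these into the $d^2\times d^2$ Jacobian matrix and ordering the rows so that the $e_d$-components of the $x_i$ come last produces a block matrix of the shape $\Bigl(\begin{smallmatrix}A & I_{d(d-1)}\\ C & 0\end{smallmatrix}\Bigr)$, whose determinant equals $\pm\det(C)$ where $C$ has rows $(-y_i^{(1)},\dots,-y_i^{(d-1)},1)$ for $i=1,\dots,d$. A row-subtraction and expansion along the last column collapses $|\det C|$ to $|\det(y_i-y_1)_{i=2,\dots,d}|=(d-1)!\,\Delta_{d-1}(y_1,\dots,y_d)$, which is the desired Jacobian. Plugging this into the change-of-variables formula, using that $\Delta_{d-1}$ and $\lambda_H^d$ are translation invariant to replace $y_i$ by $x_i=y_i+tu\in H(u,t)$, and finally rewriting the spherical--$\RR$ integral back in terms of $\mu_{d-1}$ yields the claim.
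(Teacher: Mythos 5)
Your argument is correct, but it is worth pointing out that the paper does not prove this proposition at all: it imports it verbatim as a special case of Theorem 7.2.7 in \cite{SW} (the affine Blaschke--Petkantschin formula with $q=d-1$ and $d$ points, where the exponent of $\Delta_{d-1}$ is $d-q=1$). What you have written is a self-contained derivation, and it is essentially the classical proof of that cited theorem. The pieces all check out: the pullback of $\mu_{d-1}$ to $\frac1{\omega_d}\int_{\SSd}\int_{\RR}$ is consistent with the normalization $\mu_{d-1}(\{H:H\cap\BBd\neq\emptyset\})=\kappa_1=2$ recalled in Section 2.1; the map $\Phi$ is $2$-to-$1$ off the Lebesgue-null set of affinely dependent tuples; the block matrix $\bigl(\begin{smallmatrix}A&I\\ C&0\end{smallmatrix}\bigr)$ has $|\det|=|\det C|$; and $|\det C|=(d-1)!\,\Delta_{d-1}(y_1,\dots,y_d)$ after the row subtraction, which together with the $\tfrac12$ from the double cover and the $\omega_d$ from reassembling $\mu_{d-1}$ yields exactly the prefactor $\tfrac{\omega_d}{2}(d-1)!$. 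Two small remarks: the ``flatness'' of the moving frame is not actually needed, since the determinant depends only on the $C$-block (the $e_d$-components of $\partial_{\eta_k}e_j$, which are forced by differentiating $\langle e_j(u),u\rangle\equiv0$) and is insensitive to whatever lands in $A$; and you should state explicitly that the restriction of $\Phi$ to the generic locus is a local diffeomorphism with constant multiplicity $2$, so that the area formula applies with the factor $\tfrac12$. With those cosmetic additions your proof is complete and could replace the citation; the citation, of course, buys brevity.
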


Besides the affine Blaschke-Petkantschin formula in $\RR^d$ we need its spherical counterpart, which is a special case of Theorem 1 in \cite{Zaehle} and can also be found in \cite[Theorem 4]{Miles}.

\begin{prop}\label{thm:spherical-affine-BP}
	Let $f:(\SSd)^d \rightarrow \RR$ be a non-negative measurable function. Then,
	\begin{align*}
		&\int \limits_{(\SSd)^d} f(x_1,\dots,x_d) \, \mathcal{H}^{d(d-1)}_{(\SSd)^d} (\dint(x_1,\ldots,x_d))= (d-1)! \int \limits_{A(d,d-1)} \int \limits_{(H\cap\SSd)^d} f(x_1,\dots,x_d) \\
		&\qquad\qquad\qquad\times\Delta_{d-1}(x_1,\dots,x_d) (1-h^2)^{-\frac{d}{2}} \, \mathcal{H}^{d(d-2)}_{(H \cap\SSd)^d} (\dint(x_1,\ldots,x_d)) \,\mu_{d-1} (\dint H) \mbox{,}
	\end{align*}
	where $h$ denotes the distance from $H$ to the origin.
\end{prop}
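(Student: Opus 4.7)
The plan is to derive the spherical Blaschke--Petkantschin formula from the affine version (Proposition~\ref{thm:affine-BP}) via a limiting argument. Geometrically, $d$ affinely independent points on $\SSd$ determine a unique affine hyperplane $H$ and lie on the small $(d-2)$-sphere $H \cap \SSd$ of radius $\sqrt{1-h^2}$; the formula may thus be viewed as the disintegration of the surface measure $\mathcal{H}^{d(d-1)}_{(\SSd)^d}$ along the map $(x_1,\dots,x_d) \mapsto H$.

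To execute, I would fix a non-negative measurable $f$ on $(\SSd)^d$ and a family $(\phi_\e)_{\e > 0}$ of non-negative continuous functions on $(0,\infty)$ supported in $[1-\e, 1+\e]$ with $\int \phi_\e(\rho)\,\dint\rho = 1$, and apply the affine Blaschke--Petkantschin formula of Proposition~\ref{thm:affine-BP} to the radially smeared test function
\[
g_\e(x_1,\dots,x_d) = f\bigl(x_1/\|x_1\|,\dots,x_d/\|x_d\|\bigr)\prod_{i=1}^d \phi_\e(\|x_i\|).
\]
Switching to polar coordinates $x_i = r_i\omega_i$ on the left-hand side separates the radial and angular integrals; the radial factors $\int_0^\infty \phi_\e(r)r^{d-1}\,\dint r$ tend to $1$ as $\e \to 0$, so the left-hand side converges to $\int_{(\SSd)^d} f\,\mathcal{H}^{d(d-1)}_{(\SSd)^d}$.

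The heart of the proof lies in the right-hand side. For a fixed $H \in A(d,d-1)$ with unit normal $n$ and signed distance $h$ from the origin, I would parametrize $x \in H$ by $x = hn + r\xi$ with $\xi \in \SS^{d-2} \subset n^\perp$, so that $\lambda_H(\dint x) = r^{d-2}\,\dint r\,\dint\xi$. Switching to the radial variable $\rho = \|x\| = \sqrt{h^2 + r^2}$ yields $r^{d-2}\,\dint r = (\rho^2 - h^2)^{(d-3)/2}\rho\,\dint\rho$, and the concentration of $\phi_\e$ at $\rho = 1$ forces each $x_i$ onto $H \cap \SSd$, which is a $(d-2)$-sphere of radius $\sqrt{1-h^2}$ inside $H$. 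Comparing the angular measure $\dint\xi$ with the Hausdorff surface element $\mathcal{H}^{d-2}(\dint x) = (1-h^2)^{(d-2)/2}\,\dint\xi$ on this sphere then produces the factor $(1-h^2)^{-1/2}$ per point; multiplying across the $d$ points gives the claimed Jacobian $(1-h^2)^{-d/2}$, while the factor $\Delta_{d-1}$ is inherited directly from Proposition~\ref{thm:affine-BP}.

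The main obstacle will be the rigorous justification of the passage $\e \to 0$ on both sides. I would handle this by dominated convergence, exploiting that the support of $g_\e$ is contained in a bounded annulus of $(\RR^d)^d$ on which $\Delta_{d-1}$ is uniformly bounded, together with a careful bookkeeping of the normalizing constants arising from the parametrization of $\mu_{d-1}$ by $(n, h)$ modulo the identification $(n, h) \sim (-n, -h)$; a cross-check on a simple test case (e.g., $d=2$ with $f \equiv 1$) fixes the constant appearing in front of the integral.
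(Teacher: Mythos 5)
You should first note that the paper does not prove this proposition at all: it is quoted as a special case of Theorem~1 of Z\"ahle and of Theorem~4 of Miles. So your plan --- deriving the spherical formula from the affine one of Proposition~\ref{thm:affine-BP} by radial smearing and a limit $\e\to0$ --- is a genuinely different, self-contained route, and its geometric core is correct: the radial change of variables $r^{d-2}\,\dint r=(\rho^2-h^2)^{(d-3)/2}\rho\,\dint\rho$, combined with the conversion $\mathcal{H}^{d-2}_{H\cap\SSd}(\dint x)=(1-h^2)^{(d-2)/2}\,\dint\xi$, produces exactly the factor $(1-h^2)^{-1/2}$ per point and hence the Jacobian $(1-h^2)^{-d/2}$.

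There is, however, a concrete issue with the constant. Your argument inherits the prefactor $\frac{\omega_d}{2}(d-1)!$ from Proposition~\ref{thm:affine-BP}, and nothing in the limiting procedure can alter it; so what you will prove is the identity with constant $\frac{\omega_d}{2}(d-1)!$, not the stated $(d-1)!$. Your own proposed cross-check exposes this: for $d=2$ and $f\equiv1$ the left-hand side equals $(2\pi)^2=4\pi^2$, while the right-hand side as printed, computed with the measure $\mu_1$ normalized as in Section~\ref{sec:Background} (so that $\mu_1(\{E:E\cap\BB^2\neq\emptyset\})=2$), equals $4\pi$. The printed constant $(d-1)!$ tacitly presupposes the Miles--Z\"ahle parametrization of the hyperplane measure, namely $\int_{\SSd}\int_0^\infty\dint h\,\mathcal{H}^{d-1}_{\SSd}(\dint u)$, which is $\frac{\omega_d}{2}\mu_{d-1}$ in the paper's normalization. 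So do not ``fix'' your constant to match the statement; your method gives the correct constant for the paper's $\mu_{d-1}$, and the discrepancy is harmless downstream only because all constants are absorbed into $c$ in the proof of Theorem~\ref{thm:monotonicity}.

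Two analytic gaps remain in the limiting step itself. First, pointwise convergence of the inner integral over $H^d$ requires $f$ to be continuous; for merely non-negative measurable $f$ you must first prove the identity for $f\in C((\SSd)^d)$ and then extend it, e.g.\ by observing that both sides are integrals of $f$ against Borel measures on the compact space $(\SSd)^d$, that finite Borel measures agreeing on continuous functions coincide, and then invoking monotone convergence. Second, and more seriously, dominated convergence ``because the support is a bounded annulus on which $\Delta_{d-1}$ is bounded'' does not suffice: the degeneracy sits at grazing hyperplanes $h\approx1$, precisely where the limit density $(1-h^2)^{-d/2}$ blows up. For $d\geq3$ one can rescue your argument, since $(\rho^2-h^2)^{(d-3)/2}\leq(1+\e)^{d-3}$ gives a bound on the inner $H^d$-integral that is uniform in $(H,\e)$, so $C\,\mathds{1}\{h\leq 2\}$ dominates. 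But for $d=2$ the factor $(\rho^2-h^2)^{-1/2}$ makes the inner integral of order $\e^{-1/2}$ at $h=1$, so no $\e$-uniform crude bound is integrable, and you need an additional estimate showing that $\lim_{\delta\to0}\limsup_{\e\to0}$ of the contribution of $\{H: h>1-\delta\}$ vanishes. This is fixable, but it must appear in the proof, since Theorem~\ref{thm:monotonicity} is claimed for all $d\geq2$.
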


\subsection{A slice integration formula}

Finally, we will make use of the following special case of the spherical slice integration formula taken from Theorem A.4 in \cite{ABR}.

\begin{prop}\label{thm:slice-integration}
	Let $f:\SSd \rightarrow \RR$ be a non-negative measurable function. Then,
	\[ 
		\int \limits_{\SSd} f(x)\,\mathcal{H}^{d-1}_{\SSd}(\dint x) = \int \limits_{-1}^1 (1-t^2)^{\frac{d-3}{2}} \int \limits_{\SS^{d-2}} f(t,\sqrt{1-t^2}\,y)\, \mathcal{H}^{d-2}_{\SS^{d-2}}(\dint y) \,\dint t \mbox{.}
	 \]
\end{prop}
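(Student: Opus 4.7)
My plan is to prove the identity by constructing an explicit parameterization of $\SSd$ that realizes the slicing by parallel hyperplanes $\{x_1=t\}$ suggested by the right-hand side, and then computing the Jacobian of this parameterization.

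First I would introduce the map
\[
\phi: (-1,1) \times \SS^{d-2} \to \SSd, \qquad \phi(t,y) = \bigl(t,\, \sqrt{1-t^2}\, y\bigr),
\]
where the codomain is regarded as the unit sphere in $\RR \times \RR^{d-1}$. One checks immediately that $\|\phi(t,y)\|^2 = t^2 + (1-t^2)\|y\|^2 = 1$, so the image lies on $\SSd$, and that $\phi$ is a smooth bijection onto $\SSd \setminus \{\pm e_1\}$. Since the two poles form a set of $\mathcal{H}^{d-1}$-measure zero, the claim reduces, via the area formula for smooth maps between Riemannian manifolds, to showing that the Jacobian of $\phi$ equals $(1-t^2)^{(d-3)/2}$.

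The Jacobian computation is the heart of the argument. Fixing $(t,y)$ and an orthonormal basis $v_1,\ldots,v_{d-2}$ of $T_y \SS^{d-2}$, I would compute
\[
d\phi(\partial_t) = \Bigl(1,\, -\tfrac{t}{\sqrt{1-t^2}}\, y\Bigr), \qquad d\phi(v_i) = \bigl(0,\, \sqrt{1-t^2}\, v_i\bigr),
\]
whose Euclidean norms in $\RR^d$ are $(1-t^2)^{-1/2}$ and $(1-t^2)^{1/2}$, respectively. The key observation is that these $d-1$ image vectors are mutually orthogonal: orthogonality of $d\phi(v_i)$ and $d\phi(v_j)$ follows from $v_i \perp v_j$, and orthogonality of $d\phi(\partial_t)$ to each $d\phi(v_i)$ uses the elementary fact that the position vector $y$ is perpendicular to $T_y \SS^{d-2}$. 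Consequently the Jacobian equals the product of the individual norms,
\[
(1-t^2)^{-1/2} \cdot (1-t^2)^{(d-2)/2} = (1-t^2)^{(d-3)/2},
\]
which is exactly the weight appearing in the statement.

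The only genuinely conceptual point is the orthogonality verification above; the remainder is bookkeeping. Heuristically, the factor $(1-t^2)^{-1/2}$ corresponds to the stretching of spherical arclength along the $\partial_t$ direction as $t$ moves away from the equator, while $(1-t^2)^{(d-2)/2}$ is the scaling that relates the surface area of the slice $\SSd \cap \{x_1=t\}$, a $(d-2)$-sphere of radius $\sqrt{1-t^2}$, to that of the standard unit sphere $\SS^{d-2}$. Substituting the Jacobian into the area formula then yields the stated identity after Fubini.
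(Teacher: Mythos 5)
Your proof is correct, but note that the paper never proves this proposition at all: it simply imports it as a special case of Theorem A.4 in the book of Axler, Bourdon and Ramey \cite{ABR}. Your argument is therefore a self-contained replacement for a citation rather than an alternative to an in-paper proof. The route you take is the natural differential-geometric one, and every step checks out: the map $\phi(t,y)=(t,\sqrt{1-t^2}\,y)$ is a smooth bijection from $(-1,1)\times\SS^{d-2}$ onto $\SSd\setminus\{\pm e_1\}$, the two poles are $\mathcal{H}^{d-1}$-null, the images $d\phi(\partial_t)$ and $d\phi(v_i)$ are mutually orthogonal (the crucial point being $y\perp T_y\SS^{d-2}$), so the Gram determinant factorizes into the product of norms $(1-t^2)^{-1/2}\cdot(1-t^2)^{(d-2)/2}=(1-t^2)^{(d-3)/2}$, and Tonelli applies since $f$ is non-negative and measurable. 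Two small points of bookkeeping you may wish to make explicit. First, the area formula in the form the paper itself uses elsewhere (Federer, Theorem 3.2.3) is stated for maps defined on subsets of Euclidean space, so one should either compose $\phi$ with local charts of $\SS^{d-2}$ or invoke the Riemannian version you allude to; this is routine but worth a sentence. Second, in the edge case $d=2$ the slice sphere $\SS^{0}=\{\pm 1\}$ is zero-dimensional, $\mathcal{H}^{0}$ is counting measure and the tangent space $T_y\SS^{d-2}$ is trivial, so the Jacobian reduces to $\|d\phi(\partial_t)\|=(1-t^2)^{-1/2}$, which is indeed $(1-t^2)^{(d-3)/2}$ for $d=2$; your computation degenerates gracefully, but the statement that the frame consists of $d-1$ vectors silently assumes $d\geq 3$ unless this is pointed out. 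What your approach buys is independence from \cite{ABR}; what the paper's citation buys is brevity and a reference valid in the generality of Theorem A.4 there, of which the stated identity is only the special case needed here.
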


\section{Auxiliary results}\label{sec:aux-results}

\subsection{An estimate for integrals of concave functions}
A version of the next lemma was already stated in \cite{Beer}, but without proof. For the sake of completeness we include here the short argument.

\begin{lem}\label{l1}
	Let $h:\RR \rightarrow \RR$ be a non-negative measurable function which is strictly positive on a set of positive Lebesgue measure. Further, let $g:\RR \rightarrow \RR$ be a linear function with negative slope and root $s^\ast \in [0,1]$. Moreover, let $L:\RR \rightarrow \RR$ be positive and strictly concave on $[0,1]$. Then,
	\begin{equation}\label{e1}
		\int \limits_{0}^{1} h(s)g(s)L(s)^{d-1} \,\dint s > \int \limits_0^1 h(s)g(s)\ell(s)^{d-1} \,\dint s \mbox{,}
	\end{equation}
	where $\ell(s)=\frac{L(s^\ast)}{s^\ast}s$.
\end{lem}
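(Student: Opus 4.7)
The plan is to compare $L(s)^{d-1}$ and $\ell(s)^{d-1}$ pointwise on $[0,1]$ and observe that the sign of their difference is perfectly correlated with the sign of $g(s)$, so that the integrand on the left of \eqref{e1} dominates the one on the right pointwise, strictly so on a set of positive measure.

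Concretely, I would introduce the auxiliary function $\phi(s) := L(s) - \ell(s)$. Since $\ell$ is linear and $L$ is strictly concave on $[0,1]$, $\phi$ is strictly concave on $[0,1]$ as well. By construction $\ell(s^\ast) = L(s^\ast)$, so $\phi(s^\ast) = 0$; moreover $\phi(0) = L(0) - \ell(0) = L(0) > 0$ because $L$ is assumed positive on $[0,1]$. The first key step is to derive from strict concavity the sign pattern
\[
\phi(s) > 0 \text{ on } [0,s^\ast), \qquad \phi(s^\ast) = 0, \qquad \phi(s) < 0 \text{ on } (s^\ast,1].
\]
For $s\in(0,s^\ast)$ this follows by writing $s$ as a convex combination of $0$ and $s^\ast$ and using strict concavity together with $\phi(0)>0$ and $\phi(s^\ast)=0$. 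For $s\in(s^\ast,1]$, writing instead $s^\ast$ as a convex combination of $0$ and $s$ and applying (strict) concavity yields $0=\phi(s^\ast) \ge (1-s^\ast/s)\phi(0) + (s^\ast/s)\phi(s)$, which forces $\phi(s)<0$ since $\phi(0)>0$.

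Next, because both $L$ and $\ell$ are non-negative on $[0,1]$ and $x\mapsto x^{d-1}$ is strictly increasing on $[0,\infty)$, the sign of $L(s)^{d-1}-\ell(s)^{d-1}$ agrees with the sign of $\phi(s)$ on $[0,1]$. On the other hand, $g$ is linear with negative slope and root $s^\ast$, so $g(s)>0$ on $[0,s^\ast)$, $g(s^\ast)=0$, $g(s)<0$ on $(s^\ast,1]$. Combining these two sign tables yields
\[
g(s)\bigl(L(s)^{d-1}-\ell(s)^{d-1}\bigr) \ge 0 \quad \text{for every } s\in[0,1],
\]
with strict inequality on $[0,1]\setminus\{s^\ast\}$. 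Multiplying by $h(s)\ge 0$ preserves non-negativity, and since $h$ is strictly positive on a subset of positive Lebesgue measure (necessarily intersecting $[0,1]\setminus\{s^\ast\}$, since removing a single point does not alter the measure), the integrand $h(s)g(s)(L(s)^{d-1}-\ell(s)^{d-1})$ is strictly positive on a set of positive measure. Integrating over $[0,1]$ then delivers the claimed strict inequality \eqref{e1}.

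The argument is largely bookkeeping about signs; the only delicate point is establishing the sign pattern of $\phi$ from strict concavity together with the single piece of data $\phi(0)>0$, $\phi(s^\ast)=0$, and the fact that one should tacitly assume $s^\ast\in(0,1]$ for $\ell$ to be defined. The passage from $L>\ell$ to $L^{d-1}>\ell^{d-1}$ is immediate thanks to positivity of both functions, and no use of the precise form of $\ell$ is needed beyond $\ell(s^\ast)=L(s^\ast)$ together with linearity.
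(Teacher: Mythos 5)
Your proof is correct and follows essentially the same route as the paper's: both establish the pointwise comparison $L(s)\gtrless \ell(s)$ on either side of $s^\ast$ from concavity (the paper via writing $s$ resp.\ $s^\ast$ as a multiple of the other point, you via the auxiliary concave function $\phi=L-\ell$) and then pair this with the sign of $g$ to obtain pointwise domination of the integrand. You are in fact a touch more careful than the paper about where the strictness of the final integral inequality comes from (positivity of $h$ on a set of positive measure inside $[0,1]$) and about the tacit assumption $s^\ast>0$ needed for $\ell$ to be defined.
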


\begin{proof}
	We start by exploiting the positivity and strict concavity of $L$. For $s \in [0,s^\ast)$, it implies that
	\begin{equation}\label{eq:conc1}
L(s) =  L\left( \frac{s}{s^\ast} s^\ast \right) >  \frac{s}{s^\ast} L(s^\ast),
	\end{equation}
	while for $s \in (s^\ast,1]$, it gives
	\begin{equation}\label{eq:conc2}
L(s) <  \frac{s}{s^\ast} L(s^\ast).
	\end{equation}
	Since $g$ has a negative slope, it is positive on $[0,s^\ast)$ and negative on $(s^\ast,1]$. Splitting the integral on the left hand side of \eqref{e1} at the point $s^\ast$ and using \eqref{eq:conc1} and \eqref{eq:conc2}, respectively, yields
	\begin{align*}
		&\int \limits_{0}^{1} h(s)g(s)L(s)^{d-1} \, \dint s\\
		 &= \int \limits_{0}^{s^\ast} h(s)g(s)L(s)^{d-1} \, \dint s + \int \limits_{s^\ast}^{1} h(s)g(s)L(s)^{d-1} \, \dint s \\
		& > \int \limits_{0}^{s^\ast} h(s)g(s)\left(\frac{s}{s^\ast} L(s^\ast)\right)^{d-1} \, \dint s + \int \limits_{s^\ast}^{1} h(s)g(s)\left(\frac{s}{s^\ast} L(s^\ast)\right)^{d-1} \, \dint s \\
		& = \int \limits_{0}^{1} h(s)g(s) \ell(s)^{d-1}  \,\dint s.
	\end{align*}
	This completes the argument.
\end{proof}

\subsection{Computation of marginal densities}

Recall the definitions of the distribution classes $\bH$, $\bB$ and $\bU$. It will turn out that it suffices to consider the cases where the scale parameters $\sigma$ there are equal to $1$. From now on we restrict to these cases and denote the density of a distribution in $\bH$ by
\begin{equation}\label{eq:DensityH}
p_{\bH,\beta}(x)=\pi^{-d/2}\frac{\G(\b)}{\G(\b-{d\over 2})} (1+\norm{x}^2)^{-\b}\,,\qquad x\in\RR^d\,,\beta>d/2\,,
\end{equation}
that of a distribution in $\bB$ by
$$
p_{\bB,\beta}(x)=\pi^{-d/2}\frac{\G({d\over 2}+\b+1)}{\G(\b+1)}(1-\norm{x}^2)^\b\,,\qquad x\in\BBd\,,\beta>-1\,,
$$
and note that the uniform distribution on $\SSd$ has density
$$
p_{\bU}(x)=\frac{1}{\omega_d}\,,\qquad x\in\SSd\,,
$$
with respect to the spherical Lebesgue measure. The next lemma provides formulas for the densities of the one-dimensional marginals of these distributions and shows, that the classes $\bB$ and $\bH$ are in some sense closed under one-dimensional projections. Since all distributions we consider are rotationally symmetric, it is sufficient to consider projections onto the first coordinate. We would like to emphasize that the proof of Lemma \ref{l2} uses in an essential way the scaling property \eqref{eq:Scaling} below of the involved densities.

\begin{lem}\label{l2}
	Let $\Pi:\RR^d \rightarrow\RR,(x_1,\ldots,x_d)\mapsto x_1$ be the  projection onto the first coordinate.
	\begin{itemize}
	\item[(i)] Let $\PP\in\bH$ be a distribution with density $p_{\bH,\beta}$ for some $\beta>d/2$. Then, the image measure of $\PP$ under $\Pi$ has density
	$$
	f_{\bH,\beta}(x)=\pi^{-1/2}\frac{\G\left(\b-\frac{d-1}{2}\right)}{\G\left(\b-\frac{d}{2}\right)}(1+x^2)^{\frac{d-1}{2}-\b},\qquad x\in\RR.
	$$
	\item[(ii)] Let $\PP\in\bB$ be a distribution with density $p_{\bB,\beta}$ for some $\beta>-1$. Then, the image measure of $\PP$ under $\Pi$ has density
	$$
	f_{\bB,\beta}(x)=\pi^{-1/2}\frac{\G\left(\b+1+\frac{d}{2}\right)}{\G\left(\b+\frac{d+1}{2}\right)}(1-x^2)^{\frac{d-1}{2}+\b},\qquad x\in[-1,1].
	$$	
	\item[(iii)] Let $\PP\in\bU$ be the uniform distribution on $\SSd$. Then, the image measure of $\PP$ under $\Pi$ has density
	$$
	f_{\bU}(x)=\pi^{-1/2}\frac{\G\left( \frac{d}{2} \right)}{\G\left(\frac{d-1}{2}\right)}(1-x^2)^{\frac{d-3}{2}},\qquad x\in[-1,1].
	$$
	\end{itemize}
\end{lem}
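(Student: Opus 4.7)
\textbf{Proof proposal for Lemma \ref{l2}.} For parts (i) and (ii), the plan is to integrate the joint density of $X=(X_1,\ldots,X_d)$ over the last $d-1$ coordinates $y=(x_2,\ldots,x_d)$ to obtain the marginal. Since in both cases the integrand depends on $y$ only through $\|y\|$, spherical coordinates in $\RR^{d-1}$ will collapse the problem to a one-dimensional radial integral. The structural ingredient that the lemma emphasises is the scaling identity
$$1\pm\|x\|^2 \;=\; (1\pm x_1^2)\left(1\pm \tfrac{\|y\|^2}{1\pm x_1^2}\right),$$
which via the substitution $z = y/\sqrt{1\pm x_1^2}$ (with matching signs in the two cases) separates the $x_1$-dependence as a single power $(1\pm x_1^2)^{(d-1)/2\mp\beta}$ multiplied by a universal integral that no longer involves $x_1$.

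For (i) this universal integral is $\int_{\RR^{d-1}}(1+\|z\|^2)^{-\beta}\,\dint z$; in polar coordinates combined with the substitution $u = r^2/(1+r^2)$ it becomes a standard Beta integral and evaluates to $\pi^{(d-1)/2}\Gamma(\beta-(d-1)/2)/\Gamma(\beta)$. Multiplying by the $(1+x_1^2)$-prefactor above and by the normalising constant of $p_{\bH,\beta}$ produces $f_{\bH,\beta}$ after the $\Gamma(\beta)$-factors cancel. For (ii) I would run the analogous computation over $\BB^{d-1}$, giving $\int_{\BB^{d-1}}(1-\|z\|^2)^\beta\,\dint z = (\omega_{d-1}/2)\,B((d-1)/2,\beta+1)$, and the same assembly with the normalising constant of $p_{\bB,\beta}$ yields $f_{\bB,\beta}$.

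For (iii) the distribution is singular on $\RR^d$, so the integration-over-$y$ route is unavailable. Instead I would invoke the spherical slice integration formula (Proposition~\ref{thm:slice-integration}) directly: applied to the indicator of a Borel set $A\subseteq[-1,1]$ acting on the first coordinate, its inner integral collapses to the constant $\omega_{d-1}$, giving
$$\PP(X_1\in A) \;=\; \frac{\omega_{d-1}}{\omega_d}\int_A (1-t^2)^{(d-3)/2}\,\dint t,$$
from which $f_{\bU}(x) = (\omega_{d-1}/\omega_d)(1-x^2)^{(d-3)/2}$. Using $\omega_k = 2\pi^{k/2}/\Gamma(k/2)$ simplifies $\omega_{d-1}/\omega_d$ to the stated constant.

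Nothing in this is conceptually hard; the only obstacle is careful bookkeeping of the Gamma-function normalisations, so that the prescribed constants in front of $f_{\bH,\beta}$, $f_{\bB,\beta}$ and $f_{\bU}$ emerge exactly. The essential ideas are the scaling identity together with the reduction to a standard Beta integral for (i) and (ii), and the direct application of the slice formula for (iii).
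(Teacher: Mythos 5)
Your proposal is correct and follows essentially the same route as the paper: the same scaling identity and substitution $z=y/\sqrt{1\pm x_1^2}$ for (i) and (ii), and the same direct application of the slice integration formula for (iii). The only cosmetic difference is that you evaluate the residual $(d-1)$-dimensional integral explicitly as a Beta integral, whereas the paper recognises it as the total mass of the normalised $(d-1)$-dimensional density $c_{\,\cdot\,,d-1,\beta}(1\pm\norm{z}^2)^{\mp\beta}$ and so reads it off as $c_{\,\cdot\,,d,\beta}/c_{\,\cdot\,,d-1,\beta}$ without further computation.
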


\begin{proof}
To prove (i) we put $x=(x_1,\ldots,x_d)\in\RR^d$, $y:=(x_2,\dots,x_d)$ and also define $c_{\bH,d,\b}:=\pi^{-d/2}\frac{\G(\b)}{\G(\b-d/2)}$. Then, 
	\begin{align*}
		&\int \limits_{\RR^{d-1}} c_{\bH,d,\b}  \left(1+\norm{x}^2\right)^{-\b} \, \dint (x_2, \dots, x_d) \\
		&= \int \limits_{\RR^{d-1}} c_{\bH,d,\b} (1+x_1^2)^{-\b} \left(1+\frac{\norm{y}^2}{1+x_1^2}\right)^{-\b}  \,\dint y \\
		&= (1+x_1^2)^{-\b} \int \limits_{\RR^{d-1}} c_{\bH,d,\b}  \left(1+\norm{z}^2\right)^{-\b} (1+x_1^2)^{\frac{d-1}{2}} \, \dint z \\
		&=(1+x_1^2)^{\frac{d-1}{2}-\b} \frac{c_{\bH,d,\b}}{c_{\bH,d-1,\b}} \int \limits_{\RR^{d-1}} c_{\bH,d-1,\b}  \left(1+\norm{z}^2\right)^{-\b} \, \dint z \\
		&= \frac{c_{\bH,d,\b}}{c_{\bH,d-1,\b}} (1+x_1^2)^{\frac{d-1}{2}-\b} \mbox{,}
	\end{align*}
	where we used the substitution $z=y/\sqrt{1+x_1^2}$. Plugging in the constants yields the desired result.

Next, we consider the distribution with density $p_{\bB,\beta}$. For $x=(x_1,\ldots,x_d)\in\BB^d$, we put again $y:=(x_2,\dots,x_d)$ and abbreviate $c_{\bB,d,\b}:=\pi^{-d/2}\frac{\G({d\over 2}+\b+1)}{\G(\b+1)}$. Then, similarly as above, we compute
	\begin{align*}
		&\int \limits_{\BB^{d-1}} c_{\bB,d,\b}  \left(1-\norm{x}^2\right)^{\b} \, \dint (x_2, \dots, x_d) \\
		&= \int \limits_{\BB^{d-1}} c_{\bB,d,\b} (1-x_1^2)^{\b} \left(1-\frac{\norm{y}^2}{1-x_1^2}\right)^{\b} \, \dint y \\
		&= (1-x_1^2)^{\b} \int \limits_{\BB^{d-1}} c_{\bB,d,\b}  \left(1-\norm{z}^2\right)^{\b} (1-x_1^2)^{\frac{d-1}{2}} \, \dint z \\
		&=(1-x_1^2)^{\frac{d-1}{2}+\b} \frac{c_{\bB,d,\b}}{c_{\bB,d-1,\b}} \int \limits_{\BB^{d-1}} c_{\bB,d-1,\b}  \left(1-\norm{z}^2\right)^{\b} \, \dint z \\
		&= \frac{c_{\bB,d,\b}}{c_{\bB,d-1,\b}} (1-x_1^2)^{\frac{d-1}{2}+\b} \mbox{,}
	\end{align*}
	where we used the substitution $z=y/\sqrt{1-x_1^2}$. Again, simplification of the constants yields the desired result.

Finally, we consider the case of the uniform distribution on $\SSd$. We denote by $F$ the distribution function of the image measure of $\PP=\o_d^{-1}\cH^{d-1}_{\SSd}$ under the orthogonal projection map $\Pi$ and let $x_1 \in [-1,1]$. Using the slice integration formula from \Cref{thm:slice-integration}, we obtain
	\begin{align*}
		\F(x_1) &= \frac{1}{\omega_d}\mathcal{H}_{\SSd}^{d-1} \left( \left\{u \in\SSd : \Pi(u) \in [-1,x_1]\right\} \right) \\
		&= \frac{1}{\omega_d}\int \limits_{\SSd} \mathds{1} \{ \Pi(u) \in [-1,x_1] \} \, \mathcal{H}_{\SSd}^{d-1} (\dint u) \\
		&=\frac{1}{\omega_d} \int \limits_{-1}^{x_1} (1-t^2)^{\frac{d-3}{2}} \int \limits_{\SS^{d-2}}  \, \mathcal{H}^{d-2}_{\SS^{d-2}}(\dint y) \,\dint t \\
		&= \frac{\o_{d-1}}{\o_d} \int \limits_{-1}^{x_1} (1-t^2)^{\frac{d-3}{2}}  \,\dint t \mbox{.}
	\end{align*} 	
Differentiation with respect to $x_1$, together with the definitions of $\o_d$ and $\o_{d-1}$, complete the proof.
\end{proof}

In what follows, we shall denote by $F_{\bH,\beta}$, $F_{\bB,\beta}$ and $F_\bU$ the distribution functions corresponding to the densities $f_{\bH,\beta}$, $f_{\bB,\beta}$ and $f_\bU$ computed in Lemma \ref{l2}, respectively.

\begin{remark}
The marginal densities of the Gaussian distributions belonging to the class $\bG$ can also be computed along the lines of the proof of Lemma \ref{l2}. This yields one-dimensional Gaussian marginals. Since random convex hulls of Gaussian points have already been treated in \cite{Beer}, we decided to concentrate on the classes $\bH$, $\bB$ and $\bU$.
\end{remark}

\section{Proof of Theorem \ref{thm:monotonicity} and discussion}\label{sec:Proofmonotonicity}

\subsection{Proof of Theorem \ref{thm:monotonicity}}

Based on the results from the two previous sections we are now able to present the proof of our main result. In what follows, we denote all constants by $c$. Unless otherwise stated, they only depend on the space dimension $d$ and the parameter $\beta$ of the underlying probability distribution. Their value might change from instance to instance.

\begin{proof}[Proof of \Cref{thm:monotonicity}.]
For the classes $\bG$, $\bH$, $\bB$ and $\bU$ it is sufficient to consider the case that the scale parameter $\sigma$ is equal to $1$, since the mean facet number is invariant under rescalings.

\medskip

We start with the distributions from class $\bG$. However, since this case has already been treated in \cite{Beer}, we refer to Theorem 5.3.1 there.

\medskip

Next, we consider the heavy-tailed distribution on $\RR^d$ with density $p_{\bH,\beta}(x)=c_{\bH,d,\b} (1+\norm{x}^2)^{-\b}$, where $\b>d/2$ and $c_{\bH,d,\b}=\pi^{-d/2}\frac{\G(\b)}{\G(\b-d/2)}$. We follow the ideas from \cite{Beer} and start with the equality
	\begin{equation}\label{e2}
	\begin{split}
		\EE f_{d-1}(P_n) &= \EE\sum_{1\leq i_1<\ldots<i_d\leq n}\mathds{1}\{\conv(X_{i_1},\ldots,X_{i_d})\text{ is a facet of }P_n\}\\
		&=\binom{n}{d} \PP(\conv(X_1,\dots,X_d) \text{ is a facet of }P_n) ,
	\end{split}		
	\end{equation}
	which holds due to the fact that the random points $X_1,\ldots,X_n$ are independent and identically distributed. Let us denote by $H\in A(d,d-1)$ the affine hull of the $(d-1)$-dimensional simplex $P_d$ spanned by $X_1,\ldots,X_d$.
	In the case that $P_d$ is a facet of $P_n$, all the remaining points $X_{d+1},\dots,X_n$ have to lie in one of the (open) half-spaces determined by $H$. If we denote by $\Pi_H$ the orthogonal projection onto $H^\perp$, the orthogonal complement of $H$, we observe that $P_d$ is a facet of $P_n$ if and only if the point $\Pi_H(P_d)$ is not contained in the interior of the interval $\Pi_H(P_n)$ on $H^\perp$. Therefore, using \Cref{l2}, the affine Blaschke-Petkantschin formula from \Cref{thm:affine-BP} and the abbreviation $F^\ast=F_{\bH,\beta}(\Pi_H(P_d))$, we get for the probability that $P_d$ is a facet of $P_n$,
	\begin{align*}
		&\PP(\conv(X_1,\dots,X_d) \text{ is a facet of }P_n ) \\ &= \int \limits_{(\RR^d)^d} \left( (1-F^\ast)^{n-d} + (F^\ast)^{n-d} \right) \prod_{i=1}^{d} c_{\bH,d,\b}  (1+\norm{x_i}^2)^{-\b} \, \dint(x_1,\dots,x_d) \\
		&=c \int \limits_{A(d,d-1)} \int \limits_{H^d} \left( (1-F^\ast)^{n-d} + (F^\ast)^{n-d} \right) \Delta_{d-1}(x_1,\dots,x_d)  \prod_{i=1}^{d} c_{\bH,d,\b}  (1+\norm{x_i}^2)^{-\b} \\
		&\qquad\qquad\times	 \lambda_H^d(\dint(x_1,\ldots,x_d)) \,\mu_{d-1}(\dint H) \mbox{.}
	\end{align*}
	Next, we use the theorem of Pythagoras to decompose, for each $i\in\{1,\ldots,d\}$, the norm $\|x_i\|$. Namely, writing $\|\cdot\|_H$ for the Euclidean norm in $H\in A(d,d-1)$ and $h$ for the distance from $H$ to the origin in $\RR^d$, we have that
	\begin{equation*}
	\norm{x_i}^2= \norm{x_i}^2_H + h^2 \mbox{.}
	\end{equation*}
	Therefore and as already used in the proof of Lemma \ref{l2}, the last term of the integrand can be rewritten as
	\begin{equation}\label{eq:Pythagoras}
(1+\norm{x_i}^2)^{-\b} = (1+h^2+ \norm{x_i}^2_H)^{-\b} = (1+h^2)^{-\b}\left( 1+ \frac{\norm{x_i}^2_H}{1+h^2} \right)^{-\b}.
	\end{equation}
	Moreover, since each hyperplane $H=H(u,h)$ is uniquely determined by its unit normal vector $u\in\SSd$ and its distance $h\in[0,\infty)$ to the origin, the integration over $A(d,d-1)$ can be replaced by a twofold integral over $\SSd$ and $[0,\infty)$. 	Using the substitutions $y_i=x_i/\sqrt{1+h^2}$ with $\lambda_H(\dint x_i) = (1+h^2)^{(d-1)/2} \lambda_H(\dint y_i)$, the rotational invariance of the underlying probability measure, and writing $F(h)$ for $F_{\bH,\beta}(h)$ as well as $f(h)$ for $f_{\bH,\beta}(h)$, gives in view of Lemma \ref{l2} that 
	\begin{align*}
		&\PP(\conv(X_1,\dots,X_d)  \text{ is a facet of }P_n ) \\ 
		&=c \int \limits_{\SSd} \int \limits_0^\infty  \int \limits_{H^d} \left( (1-F(h))^{n-d} + F(h)^{n-d} \right) \Delta_{d-1}(x_1,\dots,x_d)  \\ 
		&\qquad\qquad\times	(1+h^2)^{-d\b} \prod_{i=1}^{d} c_{\bH,d,\b}  \left( 1+ \frac{\norm{x_i}^2_H}{1+h^2} \right)^{-\b}  \,\lambda_H^d(\dint(x_1,\ldots,x_d))\,\dint h \, \mathcal{H}^{d-1}_{\SSd}(\dint u) \\
		& =c \int \limits_{\SSd} \int \limits_0^\infty \left( (1-F(h))^{n-d} + F(h)^{n-d} \right) (1+h^2)^{-d\left(\beta-\frac{d-1}{2}\right)+\frac{d-1}{2}} \,\dint h \,\mathcal{H}^{d-1}_{\SSd}(\dint u) \\
		&\qquad\qquad\times \int \limits_{H^d}  \Delta_{d-1}(y_1,\dots,y_d) \prod_{i=1}^{d} c_{\bH,d-1,\b}  \left( 1+ \|y_i\|^2_H \right)^{-\b} \,\lambda_H^d(\dint(y_1,\ldots,y_d)) \\
		& =c \int \limits_0^\infty \left( (1-F(h))^{n-d} + F(h)^{n-d} \right) (1+h^2)^{-d\left(\beta-\frac{d-1}{2}\right)+\frac{d-1}{2}}  \,\dint h \\
		& = c \int \limits_{-\infty}^\infty (1-F(h))^{n-d} f(h)^d (1+h^2)^{\frac{d-1}{2}} \,\dint h \mbox{,}
	\end{align*}
	where we also used the fact that the integral over $H^d$ is a finite constant given by Equation (72) in \cite{Miles} and which only depends on the space dimension $d$ and on $\beta$.
	
	Write now $s=F(h)$ and $L(s)=f\left(F^{-1}(s)\right)  \sqrt{ 1+(F^{-1}(s))^2 } $ to obtain
	\[ \PP(\conv(X_1,\dots,X_d)  \text{ is a facet of }P_n  ) = c \int \limits_0^1 (1-s)^{n-d} L(s)^{d-1} \,\dint s \mbox{.} \]
	Thus, combination of the above computation with \eqref{e2} yields the representation
	\begin{equation}\label{e3}
	\begin{split}
		&\EE f_{d-1}(P_n) - \EE f_{d-1}(P_{n-1}) \\
		&\qquad\qquad= c \int \limits_0^1 \left[ \binom{n}{d}(1-s) - \binom{n-1}{d} \right] (1-s)^{n-d-1}L(s)^{d-1} \,\dint s \mbox{.}
	\end{split}
	\end{equation}

	In order to apply \Cref{l1}, we have to verify that $L(s)$ is strictly concave on $(0,1)$. We prove this by showing that the second derivative of $L(s)$ is negative. So, let $c_{\bH,1,\b}:=\pi^{-1/2} \frac{\G(\b)}{\G(\b-1/2)}$ and recall that $f(x)=c_{\bH,1,\b} (1+x^2)^{\frac{d-1}{2}-\b}$ from Lemma \ref{l2}. Furthermore, from the definition of $F$ it follows that
	\begin{equation} \label{e4}
	\left(F^{-1}(s)\right)^\prime=\frac{1}{f\left( F^{-1}(s) \right)}=\frac{1}{c_{\bH,1,\b} \left( 1 + \left( F^{-1}(s) \right)^2 \right)^{\frac{d-1}{2}-\b}}.
	\end{equation}
	We recall that
	\begin{align*}
		L(s)&=f\left(F^{-1}(s)\right)  \sqrt{ 1+(F^{-1}(s))^2 } =c_{\bH,1,\b}\left( 1+(F^{-1}(s))^2 \right)^{\frac{d}{2}-\b} \mbox{.}
	\end{align*}
	Hence, using \eqref{e4}, the first derivative of $L(s)$ is
	\begin{align*}
		L^\prime(s)&=c_{\bH,1,\b}\left( \frac{d}{2} - \b \right) \left( 1+(F^{-1}(s))^2 \right)^{\frac{d-2}{2}-\b} 2 F^{-1}(s) \left(F^{-1}(s)\right)^\prime \\
		&= 2 \left( \frac{d}{2} - \b \right) \left( 1+(F^{-1}(s))^2 \right)^{-\frac{1}{2}} F^{-1}(s)
	\end{align*}
	and, thus, for the second derivative we find that
	\begin{align*}
		L^{\prime\prime}(s)&=2 \left( \frac{d}{2} - \b \right)\left[  \left( 1+(F^{-1}(s))^2 \right)^{-\frac{1}{2}} \left(F^{-1}(s)\right)^\prime\right.\\
		&\qquad\qquad\left.- \frac{1}{2} \left( 1+(F^{-1}(s))^2 \right)^{-\frac{3}{2}} 2 \left(F^{-1}(s)\right)^2 \left(F^{-1}(s)\right)^\prime   \right] \\
		&= 2 \left( \frac{d}{2} - \b \right) \left(F^{-1}(s)\right)^\prime \left[  \left( 1+(F^{-1}(s))^2 \right)^{-\frac{1}{2}} - \left( 1+(F^{-1}(s))^2 \right)^{-\frac{3}{2}} \left(F^{-1}(s)\right)^2  \right] \\
		&= \frac{2}{c_{\bH,1,\b}} \left( \frac{d}{2} - \b \right) \left( 1+(F^{-1}(s))^2 \right)^{\b-1-\frac{d}{2}} \left[ 1 + (F^{-1}(s))^2 -(F^{-1}(s))^2   \right] \\
		&= -\frac{2}{c_{\bH,1,\b}}  \left( \b - \frac{d}{2} \right) \left( 1+(F^{-1}(s))^2 \right)^{\b-1-\frac{d}{2}} \\
		&<0,
	\end{align*}
	where the last inequality follows from the fact that $\b>d/2$. As a consequence, we can apply \Cref{l1} to deduce that
	\begin{align*}
		&\EE f_{d-1}(P_n) - \EE f_{d-1}(P_{n-1})\\
		 &= c \int \limits_0^1 \left[ \binom{n}{d}(1-s) - \binom{n-1}{d} \right] (1-s)^{n-d-1}L(s)^{d-1} \,\dint s \\
		&>\left( \frac{L(d/n)}{d/n} \right)^{d-1} \binom{n}{d} \int \limits_0^1 (1-s)^{n-d-1} s^{d-1} \left( (1-s) - \frac{n-d}{n} \right) \dint s \\
		&=\left( \frac{L(d/n)}{d/n} \right)^{d-1} \binom{n}{d} \left( \B(d,n-d+1) - \frac{n-d}{n} \B(d,n-d) \right) \\
		&=0 \mbox{,}
	\end{align*}
	where we used the well-known relation $\B(d,n-d+1)=\frac{n-d}{n} \B(d,n-d)$ for the beta function.

\medskip

As the next case we consider the class $\bB$ of beta-type distribution on the unit ball $\BBd$ with density $f_{\bB,\beta}$ for some $\beta>-1$. In this case the proof follows almost line by line the proof for $\bH$, up to some minor modifications. In particular, \eqref{e3} stays the same except that now $L(s)=f\left(F^{-1}(s)\right)  \sqrt{ 1-(F^{-1}(s))^2 } $, where $F(h)=F_{\bB,\beta}(h)$ and $f(h)=f_{\bB,\beta}(h)$. Therefore, it follows that
	\[ L^{\prime\prime}(s) = -\frac{2}{c_{\bB,1,\b}}  \left( \b + \frac{d}{2} \right) \left( 1-(F^{-1}(s))^2 \right)^{-\b-1-\frac{d}{2}}\mbox{,} \]
where the constant $c_{\bB,1,\b}$ is $c_{\bB,1,\beta}:=\pi^{-1/2}\Gamma(\beta+{3\over 2})\Gamma(\beta+1)^{-1}$. Since $F^{-1}(s) \in (-1,1)$, we obtain $L^{\prime\prime}(s)<0$ and can conclude as in the proof for the class $\bH$ presented above.
	
\medskip

Finally, we consider the case of the uniform distribution on $\SSd$. Here we get by applying the spherical Blaschke-Petkantschin formula from \Cref{thm:spherical-affine-BP} and using the abbreviations $F(h)=F_{\bU}(h)$ and $f(h)=f_{\bU}(h)$,
	\begin{align*}
	&\PP(\conv(X_1,\dots,X_d) \text{ is a facet of }P_n ) \\ 
	&=c \int \limits_{A(d,d-1)} \int \limits_{(H\cap\SSd)^d} \left( (1-F(h))^{n-d} + F(h)^{n-d} \right) \Delta_{d-1}(x_1,\dots,x_d) (1-h^2)^{-\frac{d}{2}} \\ 
	& \qquad\qquad \times \mathcal{H}^{d(d-2)}_{(H\cap\SSd)^d}(\dint(x_1,\ldots,x_d)) \, \mu_{d-1}(\dint H) \\
	&=c \int \limits_{\SSd} \int \limits_0^1  \int \limits_{(H\cap \SSd)^d} \left( (1-F(h))^{n-d} + F(h)^{n-d} \right) \Delta_{d-1}(x_1,\dots,x_d) (1-h^2)^{-\frac{d}{2}}   \\ 
	& \qquad\qquad \times \mathcal{H}^{d(d-2)}_{(H\cap\SSd)^d}(\dint(x_1,\ldots,x_d)) \, \dint h \, \mathcal{H}^{d-1}_{\SSd}(\dint u) \\
	& =c \int \limits_{\SSd} \int \limits_0^1 \left( (1-F(h))^{n-d} + F(h)^{n-d} \right) (1-h^2)^{d\frac{d-2}{2}+\frac{d-1}{2}-\frac{d}{2}} \, \dint h \, \mathcal{H}^{d-1}_{\SSd}(\dint u) \\
	&\qquad\qquad\times \int \limits_{(\SS^{d-2})^d}  \Delta_{d-1}(y_1,\dots,y_d) \, \mathcal{H}^{d(d-2)}_{(\SS^{d-2})^d}(\dint(y_1,\ldots,y_d)) \mbox{,}
	\end{align*}
	where the substitution $x_i=y_i \sqrt{1-h^2}$ with $\mathcal{H}^{d-2}_{H\cap \SSd}(\dint x_i) = (1-h^2)^\frac{d-2}{2} \mathcal{H}^{d-2}_{H\cap\SSd}(\dint y_i)$ was used. In particular, this transforms the integration over $(H\cap\SSd)^d$ into a $d$-fold integral over the unit sphere in $H$, which in turn has been identified with $\SS^{d-2}$ due to rotational invariance. Since the integral over $(\SS^{d-2})^d$ is a known positive constant only depending on $d$ (the precise value can be deduced from \cite[Theorem 8.2.3]{SW}, for example), we get by rotational invariance of the underlying distribution that
	\begin{align*}
	&\PP(\conv(X_1,\dots,X_d) \text{ is a facet of }P_n )\\
	 & =c \int \limits_0^1 \left( (1-F(h))^{n-d} + F(h)^{n-d} \right) (1-h^2)^{d\frac{d-3}{2}+\frac{d-1}{2}} \,\dint h \\
	& = c \int \limits_{-1}^1 (1-F(h))^{n-d} f(h)^d (1-h^2)^{\frac{d-1}{2}} \,\dint h \mbox{.}
	\end{align*}
	As a consequence, also for the uniform distribution on $\SSd$ we arrive at an expression of the form \eqref{e3}, this time with $L(s)=f(F^{-1}(s))\sqrt{1-(F^{-1}(s))^2}$. From this point on, the proof can be completed as in the case of the distribution class $\bH$ or $\bB$. This completes the argument.
\end{proof}

\subsection{Discussion}

Let $p:\RR^d\to[0,\infty)$ denote a probability density. By a careful inspection of the proof of Theorem \ref{thm:monotonicity} we see that the following properties of the density $p$ have been used there. First of all, we used that $p$ is spherically symmetric, that is, $p(x)$ only depends on $x=(x_1,\ldots,x_d)\in\RR^d$ via $\|x\|$.  By abuse of notation, we shall write $p(r):(0,\infty)\to[0,\infty)$ with $r^2=x_1^2+\ldots+x_d^2$ for the radial part of the density $p$. This was essential to apply the Blaschke-Petkantschin formulae, which use the invariant hyperplane measure $\mu_{d-1}$. Moreover, given $H\in A(d,d-1)$ with distance $h$ to the origin, we have used that we can find $\varphi(h),\psi(h)>0$ such that
\begin{equation}\label{eq:Scaling}
p(\sqrt{r^2+h^2}) = \varphi(h)\,p\Big({r\over \psi(h)}\Big)
\end{equation}
for all $r>0$. For example, for the density $p_{\bH,\beta}$, $\beta>d/2$,  the scaling property \eqref{eq:Scaling} is satisfied with $\varphi(h)=(1+h^2)^{-\beta}$ and $\psi(h)=\sqrt{1+h^2}$, see \eqref{eq:Pythagoras}. This scaling property was essential when we separated what happens within $H$ from the contribution that arises from the distance of $H$ to the origin. However, all rotationally symmetric densities with (almost everywhere differentiable) radial part satisfying the scaling property \eqref{eq:Scaling} with an (almost everywhere differentiable) function $\psi$ have been classified by Miles \cite{Miles} (see p.\ 376 there) and Ruben and Miles \cite{RubenMiles}. They precisely correspond to the distributions in the classes $\bG$, $\bH$, $\bB$ as well as to the exceptional distributions in $\bU$, for which Theorem \ref{thm:monotonicity} is formulated. % In other words, these classes comprise precisely the distributions on $\RR^d$ for which the monotonicity question raised in the introduction can be answered positively by means of method we used.

On the other hand, this does not mean that $\bG$, $\bH$, $\bB$ and $\bU$ contain the only rotationally symmetric distributions on $\RR^d$ for which such computations are possible. For example, the density with radial part $p_{\beta,j}(r)=c_{\beta,d,j}\,r^{2j}/(1+r^2)^\beta$, $r>0$, $j\in\{0,1,2,\ldots\}$ and $\beta>j+d/2$, which does not belong to the class $\bH$ whenever $j>0$, satisfies the following generalization of the scaling property \eqref{eq:Scaling}:
\begin{align*}
p_{\beta,j}(\sqrt{r^2+h^2}) &= \sum_{k=0}^j \varphi_{k}(h)p_{\beta,k}\Big({r\over\psi(h)}\Big) %\varphi_1(h)p_{\beta,1}\Big({r\over\psi(h)}\Big)+\varphi_2(h)p_{\beta,0}\Big({r\over\psi(h)}\Big)
\end{align*}
with
\begin{align*}
\varphi_{k}(h)={j\choose k}h^{2(k-j)}(1+h^2)^{-\beta}\quad (k=0,\ldots,j)\qquad\text{and}\qquad\psi(h) = \sqrt{1+h^2}.
%\varphi_1(h)=(1+h^2)^{-(\beta-1)},\qquad\varphi_2(h)=h^2(1+h^2)^{-(\beta-1)}\qquad\text{and}\qquad\psi(h) = \sqrt{1+h^2}.
\end{align*}
One can check that the $1$-dimensional marginal density of $p_{\beta,j}$ equals
$$
f_{\beta,j}(x_1) = \sum_{k=0}^j{j\choose k}{c_{\beta,d,j}\over c_{\beta,d-1,k}}x_1^{2(k-j)}(1+x_1^2)^{k+{d-1\over 2}-\beta},
$$
and that from here on the argument based on the affine Blaschke-Petkantschin formula can be applied term-by-term. Unfortunately, the computations in such and similar situations become quite involved. Moreover, to classify \textit{all} rotationally symmetric densities on $\RR^d$ for which these computations can be performed seems to be out of reach.

One might also ask whether the method based on  Blaschke-Petkantschin formulae yields monotonicity of the mean facet number in such situations where the random points $X_1,\ldots,X_n$ are independent with distributions belonging to one of the classes $\bG$, $\bH$, $\bB$ and $\bU$, but not necessarily the same (a so-called mixed case). That is, some of the $X_i$'s are Gaussian, some distributed according to a distribution in $\bH$ etc.\ (but within each class we choose every time the same scale parameter $\sigma$). Unfortunately, this does not work and, in fact, the method breaks down. The reason is that each distribution class requires its individual substitution, which is adapted to its respective scaling property \eqref{eq:Scaling}. The resulting different rescalings in the hyperplane $H$ distort the relationship between the $(d-1)$-volume in $H$ before and after the transformation, cf.\ \cite{RubenMiles}. %On the other hand the method does give monotonicity of the mean facet number if the random points are all Gaussian, but with possibly different scale parameters (variances). The reason is that in such a case, the substitutions all have the same effect on the $(d-1)$-dimensional volume measure in $H$. However, this is no more true for the other distribution classes $\bH$, $\bB$ and $\bU$, cf.\ \cite{RubenMiles}.

\section{Random convex hulls on a half-sphere}\label{sec:halfsphere}

In this section we consider an application of Theorem \ref{thm:monotonicity} to convex hulls generated by random points on a half-sphere. We fix $d\geq 2$, denote by $\SS^d$ the $d$-dimensional unit sphere in $\RR^{d+1}$ and define the half-sphere
\[
\SS^d_+=\{y=(y_1,\ldots,y_{d+1})\in \SS^d : y_{d+1}>0\}\,.
\]
Furthermore, we let $\bS$ be the class of probability distributions on $\SS_+^d$ that have density
\[
p_{\bS,\alpha}(y)=c_{\bS,\alpha}\,y_{d+1}^{\,\alpha},\qquad y=(y_1,\ldots,y_{d+1})\in\SS_+^d, \quad\alpha>-1,
\]
with respect to the spherical Lebesgue measure on $\SS^d_+$. Here, $c_{\bS,\alpha}>0$ is a suitable normalization constant. In particular, choosing $\alpha=0$ shows that the uniform distribution on $\SS^d_+$ belongs to the class $\bS$.

For fixed $\alpha>-1$ and $n\geq d+1$ we let $X_1,\ldots,X_n$ be independent random points that are distributed on $\SS_+^d$ according to the density $p_{\bS,\alpha}$. By $S_n$ we denote the {\em spherical} convex hull of $X_1,\ldots,X_n$, that is, the smallest spherically convex set in $\SS_+^d$ containing the points $X_1,\ldots,X_n$. For the special choice $\alpha=0$, this model has recently been studied in \cite{BaranyHugReitznerSchneider}. In particular, it is shown in \cite{BaranyHugReitznerSchneider} that for this choice of $\alpha$ the mean number of facets $\EE f_{d-1}(S_n)$ of the spherical random polytope $S_n$ converges to a finite constant only depending on $d$, as $n\to\infty$ (a similar result is in fact valid for all distributions in $\bS$, see \cite{AldousEtAl,Carnal,EddyGale}). As a special case, our next result shows the somewhat surprising fact that this limit is approached in a strictly monotone way.

\begin{thm}
Let $X_1,\ldots,X_n$, $n\geq d+1$, be independent and identically distributed according to a probability measure belonging to the class $\bS$. Then,
\[
\EE f_{d-1}(S_n) > \EE f_{d-1}(S_{n-1}).
\]
\end{thm}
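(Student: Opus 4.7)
The plan is to reduce the spherical statement to the Euclidean case of Theorem \ref{thm:monotonicity}, exploiting the \emph{gnomonic} (central) projection
\[
g:\SS^d_+\to\RR^d,\qquad g(y_1,\ldots,y_{d+1})=\Big(\tfrac{y_1}{y_{d+1}},\ldots,\tfrac{y_d}{y_{d+1}}\Big),
\]
whose inverse is $\phi(z)=(z,1)/\sqrt{1+\|z\|^2}$. The crucial geometric fact is that $g$ maps great-circle arcs contained in $\SS^d_+$ onto straight line segments in $\RR^d$; consequently, for any finite set of points in $\SS^d_+$, the map $g$ sends the \emph{spherical} convex hull onto the Euclidean convex hull of their images, and it establishes a face-preserving bijection between the two. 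In particular, if we set $Z_i:=g(X_i)$ and $P_n:=\conv(Z_1,\ldots,Z_n)$, then $f_{d-1}(S_n)=f_{d-1}(P_n)$ pathwise.

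The next step is to identify the distribution of $Z_i$. A standard computation gives the change-of-variables formula
\[
\mathcal{H}^d_{\SS^d_+}(\dint y)=(1+\|z\|^2)^{-(d+1)/2}\,\dint z
\]
for the pull-back of the spherical Lebesgue measure under $\phi$, together with $y_{d+1}=(1+\|z\|^2)^{-1/2}$. Hence, starting from the density $p_{\bS,\alpha}(y)=c_{\bS,\alpha}\,y_{d+1}^{\alpha}$ of $X_i$, the push-forward density of $Z_i$ with respect to Lebesgue measure on $\RR^d$ equals
\[
c_{\bS,\alpha}\,(1+\|z\|^2)^{-\alpha/2}\,(1+\|z\|^2)^{-(d+1)/2}=c_{\bS,\alpha}\,(1+\|z\|^2)^{-(\alpha+d+1)/2}.
\]
This is exactly the density of a distribution in the class $\bH$ (with scale $\sigma=1$) for the parameter $\beta:=(\alpha+d+1)/2$, and the condition $\alpha>-1$ is equivalent to $\beta>d/2$, i.e.\ to $\bH$-admissibility. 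Since the $X_i$ are i.i.d., so are the $Z_i$.

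Combining these two ingredients, Theorem \ref{thm:monotonicity} applied to $\bH$ with the parameter $\beta=(\alpha+d+1)/2$ yields
\[
\EE f_{d-1}(S_n)=\EE f_{d-1}(P_n)>\EE f_{d-1}(P_{n-1})=\EE f_{d-1}(S_{n-1}),
\]
which is the desired inequality. The only genuinely non-routine point is the geometric claim that $g$ preserves the face structure (so that facet counts coincide); all that is needed is the elementary observation that spherical convexity on $\SS^d_+$ is defined via intersections with linear half-spaces through the origin, and these intersect the tangent hyperplane $\{y_{d+1}=1\}$ (which one identifies with $\RR^d$) exactly in affine half-spaces, so spherically convex sets and their facial structure correspond bijectively with Euclidean convex sets and theirs. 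The computation of the Jacobian and the resulting identification with $\bH$ are then straightforward.
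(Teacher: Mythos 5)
Your proposal is correct and follows essentially the same route as the paper: both use the gnomonic projection to identify $S_n$ facet-by-facet with a Euclidean convex hull $P_n$ of i.i.d.\ points whose density lies in $\bH$ with $\beta=(\alpha+d+1)/2$ (equivalently $\alpha=2\beta-d-1$, with $\alpha>-1\Leftrightarrow\beta>d/2$), and then invoke Theorem \ref{thm:monotonicity}. The only cosmetic difference is the direction of the transport (you push $p_{\bS,\alpha}$ forward to $\RR^d$, the paper pushes $p_{\bH,\beta}$ forward to $\SS^d_+$), and your explicit remark on why the face structure is preserved is a welcome addition to what the paper states without detail.
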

\begin{proof}
Let \(g\colon\RR^d\to \SS^d_+\) be the mapping defined as
	\begin{equation*}
		g(x)=\Biggl(\frac{x_1}{\sqrt{1+\norm{x}^2}},\ldots,\frac{x_d}{\sqrt{1+\norm{x}^2}},\frac{1}{\sqrt{1+\norm{x}^2}}\Biggr),
	\end{equation*}
with inverse given by
	\begin{equation*}
		g^{-1}(y)=\biggl(\frac{y_1}{y_{d+1}},\ldots,\frac{y_d}{y_{d+1}}\biggr)
	\end{equation*}
	(this is known as the gnomonic projection). Let \(\mathrm{D}g\) be the Jacobian matrix of \(g\) and put \(J_g(x)\coloneqq\sqrt{\det{\mathrm{D}g(x)}^{\!\intercal}\mathrm{D}g(x)}\). Then, it holds that
		\begin{equation*}
	 		J_g(x)=(1+\norm{x}^2)^{-\frac{d+1}{2}},
		\end{equation*}
see \cite[Proposition 4.2]{BesauWerner}. Moreover, for a measurable subset $A\subset\RR^d$ and a measurable function \(f\colon A\to\RR \) the area formula \cite[Theorem 3.2.3]{Federer} says that
	\begin{equation*}
	    \int_{A}  f(x)\,\dint x=\int_{g(A)} f\circ g^{-1}(y)(J_g\circ g^{-1}(y))^{-1}\,\cH^d_{\SS^d_+}(\dint y).
	\end{equation*}
Next, we notice that \(1+\norm{g^{-1}(y)}^2=y_{d+1}^{-2}\) and apply the formula with $f(x)=p_{\bH,\beta}(x)$ for some $\beta>d/2$:
		\begin{equation*}
		\int_{A}  c_{\bH,d,\beta}\,(1+\norm{x}^2)^{-\beta}\,\dint x=\int_{g(A)} c_{\bH,d,\beta}\, y_{d+1}^{2\beta-d-1}\,\cH^d_{\SS^d_+}(\dint y),
		\end{equation*}
where $c_{\bH,d,\beta}=\pi^{-d/2}\Gamma(\beta)/\Gamma(\beta-{d\over 2})$ is the normalization constant of the density $p_{\bH,\beta}$ defined in \eqref{eq:DensityH}. As a result, we see that the density \(p_{\bS,2\beta-d-1}\) on $\SS_+^d$ is the push-forward of the density \(p_{\bH,\beta}\) on $\RR^d$ under $g$. Note also that \(2\beta-d-1>-1\) since \(\beta>d/2\) and that the uniform measure on the half-sphere corresponds to the choice \(\beta=(d+1)/2\).

The above discussion shows the following. Let $P_n$ be the random convex hull in $\RR^d$ generated by $n$ independent points with density $p_{\bH,\beta}$. Then, the push-forward of $P_n$ has the same distribution as the spherical random polytope $S_n$ with \(\alpha=2\beta-d-1\). Moreover, the facets of $P_n$ are in one-to-one correspondence with those of $S_n$. As a consequence, the mean facet number of the spherical random polytope $S_n$ is the same as the mean facet number of the random convex hull $P_n$, i.e.,
\[\EE f_{d-1}(S_n)=\EE f_{d-1}(P_n).\]
Thus, the monotonicity follows from Theorem \ref{thm:monotonicity}.
\end{proof}

\subsection*{Acknowledgement}
We would like to thank Zakhar Kabluchko (M\"unster) for a number of useful conversations and for bringing references \cite{EddyGale} and \cite{RubenMiles} to our attention. Thanks go also to Matthias Reitzner (Osnabr\"uck) for his valuable remarks on an earlier version of this text.\\ JG, DT and NT have been supported by the Deutsche Forschungsgemeinschaft (DFG) via RTG 2131 {\em High-dimensional Phenomena in Probability -- Fluctuations and Discontinuity}.

\end{document}